\crefname{claim}{Claim}{Claims}
\crefname{conjecture}{Conjecture}{Conjectures}
\crefname{question}{Question}{Questions}
\crefname{figure}{Figure}{Figures}
\newenvironment{proofofclaim}{\noindent \textsc{Proof of Claim:}}{\unskip\nobreak\hfill$\Diamond$\medskip}
\newtheorem*{rep@theorem}{\rep@title}
\newcommand{\newreptheorem}[2]{%
\newenvironment{rep#1}[1]{%
 \def\rep@title{#2 \ref{##1}}%
 \begin{rep@theorem}}%
 {\end{rep@theorem}}}
\newtheorem{theorem}{Theorem}
\newtheorem{lemma}[theorem]{Lemma}
\newtheorem{question}{Question}
\newtheorem{claim}{Claim}
\theoremstyle{definition}
\newcommand{\TT}{\mathcal{T}}
\newcommand{\torso}[2]{#1\langle #2\rangle}
\newcommand\kk{6}
\pgfmathtruncatemacro\kkm{\kk-1}
\pgfmathtruncatemacro\kkmm{\kk-2}
\pgfmathtruncatemacro\kkmmm{\kk-3}
\newcommand\ct{11}
\pgfmathtruncatemacro\cth{(\ct-1)/2}
\pgfmathtruncatemacro\ctt{\cth * (\cth+1)/2}
\pgfmathtruncatemacro\cts{\ctt * \ctt}
\renewcommand\leq{\leqslant}
\renewcommand\geq{\geqslant}
\definecolor{myRed}{rgb}{0.68, 0.05, 0.0}
\colorlet{myBlue}{blue!70!black}
\colorlet{myViolet}{myBlue!55!myRed}
\definecolor{darkraspberry}{rgb}{0.53, 0.15, 0.34}
\definecolor{olive}{rgb}{0.42, 0.56, 0.14}
\title{Induced Minors and Region Intersection Graphs}
\author[1]{\'Edouard Bonnet}
\author[1]{Robert Hickingbotham}
\affil[1]{CNRS, ENS de Lyon, Université Claude Bernard Lyon 1, LIP, UMR 5668, Lyon, France}
\date{}
\begin{document}

\maketitle

\begin{abstract}
  We show that for any positive integers $g$ and $t$, there is a~$K_{\kk}^{(1)}$-induced-minor-free graph of girth at least~$g$ that is not a~region intersection graph over the class of $K_t$-minor-free graphs.
  This answers in a~strong form the recently raised question of whether for every graph~$H$ there is a~graph $H'$ such that $H$-induced-minor-free graphs are region intersection graphs over $H'$-minor-free graphs.
\end{abstract}

\section{Introduction}

Inspired by the success of Robertson and Seymour's graph minor theory~\cite{GraphMinors}, a~recent line of work aims to extend this theory to the realm of induced-minor-free classes.\footnote{All the relevant notions are defined in~\cref{sec:prelim}.}
Currently, far less is understood on classes excluding an induced minor than on those excluding a~minor. 
While $H$-minor-free \mbox{$n$-vertex} graphs are known since the 90's to have treewidth $O_H(\sqrt n)$~\cite{Alon90}, foreshadowed a~decade earlier by the Lipton--Tarjan planar separator theorem~\cite{Lipton79}, only recently were $H$-induced-minor-free \mbox{$m$-edge} graphs shown to have treewidth $\widetilde{O}_H(\sqrt m)$~\cite{KorhonenL24}.

There are several open questions (for simplicity, we phrase all of them as conjectures) on induced-minor-free classes.
\begin{compactitem}
\item For any planar graph $H$, the independence number of any $H$-induced-minor-free graph can be computed in polynomial time (see \cite[Question 8.2]{Dallard-3}).\footnote{Merely obtaining a~quasipolynomial-time algorithm is also a~wide open question.}
\item For any planar graph $H$, every $H$-induced-minor-free graph admits a~balanced separator dominated by a~subset of size $O_H(1)$ (Gartland--Lokshtanov's conjecture~\cite{GartlandThesis}). 
\item For any planar graph $H$, every $H$-induced-minor-free graph has treewidth at~most linear in its maximum degree (see~\cite{BonnetHKM23}).
\item For any graph $H$, the independence number admits a~polynomial-time approximation scheme in $H$-induced-minor-free graphs.
\item For any planar graph $H$, weakly sparse $H$-induced-minor-free classes have bounded twin-width (a~special case is mentioned in~\cite{Bonamy24}). 
\item For any planar graph $H$, every $H$-induced-minor-free graph has treewidth at~most linear in its Hadwiger number~(see \cite{CDDFGHHWWYl2024hadwiger}).
\item For any graph $H$, every $H$-induced-minor-free graph is quasi-isometric to an $H$-minor-free graph~(a more general conjecture is found in~\cite{GP2023Metric}).
\end{compactitem}
\medskip
All these questions are open within classes of large girth, a condition which may make them more approachable.
One more question, posed independently by Lokshtanov~\cite{Lokshtanov-pc} and McCarty~\cite{McCarty-lectures}, is whether region intersection graphs could provide a bridge between the structure of minors and induced minors. A graph $G$ is a~\emph{region intersection graph} (\emph{RIG}) over a~graph $H$ if there exists a collection $\mathcal{R}=(R_v \subseteq H \colon v\in V(G))$ of connected subgraphs of $H$ such that $uv \in E(G)$ if and only if $V(R_u)\cap V(R_v)\neq \emptyset$. We call $H$ the \emph{host graph} of $G$. 

\begin{question}\label{conj:main}
  Is every graph class excluding a~fixed induced minor included in the region intersection graphs of a~class excluding a~fixed minor?
\end{question}

If true, one could then work with the host graph and benefit from its decomposition given by the Graph Minor Structure Theorem~\cite{robertson2003graph}.
Wiederrecht asked a~related question of whether one can \emph{determine} if a~given induced-minor-free class is a~region intersection graph over a~minor-free class~\cite{Wiederrecht23op}.

Region intersection graphs were introduced by Lee~\cite{Lee17} as a generalization of the well-studied class of string graphs (intersection graph of curves on the plane). Indeed, a graph is a string graph if and only if it is a region intersection graph over some planar graph. The class of string graphs does not exclude any graph as a minor, but excludes any 1-subdivision of a~non-planar graph as an induced minor~\cite{Sinden66}. More generally, Lee~\cite{Lee17} proved the following relationship between region intersection graphs and minors.

\begin{lemma}[\cite{Lee17}]\label{No1Sub}
	For every graph $G$, if a graph $H$ is not a minor of $G$ then any graph that contains $H^{(1)}$ as an induced minor is not a region intersection graph over $G$.
\end{lemma}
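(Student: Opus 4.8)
The plan is to prove the contrapositive: assuming a graph $W$ contains $H^{(1)}$ as an induced minor and is a region intersection graph over $G$, I will show that $H$ is a minor of $G$. Fix a family $(R_w : w\in V(W))$ of connected subgraphs of $G$ witnessing that $W$ is a RIG over $G$, and fix an induced-minor model $(B_x : x\in V(H^{(1)}))$ of $H^{(1)}$ in $W$: the sets $B_x$ are pairwise disjoint, each $W[B_x]$ is connected, and for distinct $x,y$ there is an edge of $W$ between $B_x$ and $B_y$ if and only if $xy\in E(H^{(1)})$. Write $V(H)=\{v_1,\dots,v_n\}$ (and $[n]:=\{1,\dots,n\}$), let $w_e$ be the subdivision vertex of $H^{(1)}$ corresponding to $e\in E(H)$, and for $S\subseteq V(W)$ write $R(S):=\bigcup_{w\in S}R_w$, viewed as a subgraph of $G$. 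The first, routine, step is the observation that if $W[S]$ is connected then $R(S)$ is connected: given two vertices of $R(S)$, one in some $R_w$ and the other in some $R_{w'}$ with $w,w'\in S$, walk along a path from $w$ to $w'$ in $W[S]$; each edge of this path means the two corresponding regions share a vertex, so one can splice together internal paths through the successive (connected) regions into a walk in $R(S)$ joining the two given vertices.

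Next, set $P_i:=R(B_{v_i})$ for $i\in[n]$ and $Q_e:=R(B_{w_e})$ for $e\in E(H)$; these are connected subgraphs of $G$ by the previous step. The place where it is essential that $H^{(1)}$, and not merely $H$, is an induced minor is the reading-off of the intersection pattern from the model. In $H^{(1)}$ no two vertices of $V(H)$ are adjacent, no two subdivision vertices are adjacent, and $w_e$ is adjacent exactly to the two endpoints of $e$; translating through the ``if and only if'' of the induced-minor model, the $P_i$ are pairwise disjoint, the $Q_e$ are pairwise disjoint, $Q_e$ is disjoint from $P_k$ whenever $v_k\notin e$, while $V(Q_e)\cap V(P_i)\ne\emptyset$ and $V(Q_e)\cap V(P_j)\ne\emptyset$ when $e=v_iv_j$. (I also tacitly assume that every $R_w$ is non-empty and that $H$ has no isolated vertex; both are harmless, since isolated vertices of $W$ may be deleted and an isolated vertex of $H$ may be handled on its own.)

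It remains to convert this configuration --- $n$ pairwise disjoint connected ``blobs'' $P_i$, together with, for each edge $e=v_iv_j$ of $H$, a connected blob $Q_e$ meeting $P_i$ and $P_j$ and nothing else among the $P_k$ --- into a minor model of $H$ in $G$. The one subtlety is that $Q_e$ overlaps $P_i$ and $P_j$ rather than being disjoint from them and merely touching them, so the $Q_e$ cannot be used as connector sets directly; instead I trim. Inside the connected subgraph $Q_e$, pick any path from a vertex of $P_i$ to a vertex of $P_j$, let $a_e$ be its last vertex lying in $P_i$ and $b_e$ the first vertex lying in $P_j$ that occurs after $a_e$, and let $\pi_e$ be the sub-path between $a_e$ and $b_e$. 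Then $a_e\in V(P_i)$, $b_e\in V(P_j)$, $a_e\ne b_e$, and every internal vertex of $\pi_e$ lies in $Q_e$ but in none of the $P_k$; moreover the $\pi_e$ are pairwise disjoint because the $Q_e$ are.

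Finally, orient the edges of $H$ arbitrarily and let the branch set of $v_i$ be
\[
 C_i \ :=\ P_i \ \cup\ \bigcup\bigl\{\,V(\pi_e)\setminus\{a_e,b_e\}\ :\ e\in E(H)\text{ is oriented out of }v_i\,\bigr\}.
\]
Checking that $(C_i)_{i\in[n]}$ is a minor model of $H$ is then routine: each $C_i$ is connected (every absorbed path attaches to $P_i$ at $a_e$, and these paths are internally disjoint), the $C_i$ are pairwise disjoint (the absorbed paths lie in the $Q_e$, hence avoid every $P_k$, and are pairwise disjoint), and for each edge $e=v_iv_j$ of $H$ oriented out of $v_i$ the last internal vertex of $\pi_e$ --- or $a_e$ itself, if $\pi_e$ has no internal vertex --- belongs to $C_i$ and is adjacent to $b_e\in V(P_j)\subseteq C_j$, so there is an edge between $C_i$ and $C_j$. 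Hence $H$ is a minor of $G$, contradicting the hypothesis, and so $W$ is not a RIG over $G$. The step I expect to need the most care is precisely this last repackaging: recognising that the subdivision-vertex regions are exactly what connects the $P_i$ to one another, that their overlaps with the $P_i$ are the price to pay, and that trimming to a single path makes those overlaps harmless --- all of which relies on having an \emph{induced} minor model of $H^{(1)}$, so that the non-edges of $H^{(1)}$ translate into genuine disjointness in $G$.
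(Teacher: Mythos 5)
Your proof is correct; note that the paper itself gives no proof of this statement---it is quoted directly from~\cite{Lee17}---so there is nothing internal to compare against, and your argument (regions of branching vertices as blobs, subdivision-vertex regions trimmed to paths to realize the connections) is essentially the standard one behind Lee's lemma. The only cosmetic slip is that you choose the orientation of $e=v_iv_j$ independently of which endpoint of the trimmed path $\pi_e$ you labelled $a_e$; just pick the orientation to match that labelling (or observe the attachment is symmetric via $b_e$), and everything goes through.
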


Thus RIGs over an $H$-minor-free class are examples of classes excluding an induced minor.
The theory on region intersection graphs, and mainly on string graphs, is more advanced than that of induced-minor-free graphs. For instance, RIGs over $K_t$-minor-free classes can be $O_t(1)$-vertex-colored (or $O_t(1)$-edge-colored) such that every monochromatic connected component has bounded weak diameter \cite{Lee17,Davies-pc}.
Such a~result is useful in various contexts, and it would resolve several conjectures for classes excluding a~fixed induced minor (see for instance~\cite{KorhonenL24,BonnetHKM23}).
One way to achieve that would be via a~positive answer to~\cref{conj:main}.

Unfortunately, we answer \cref{conj:main} negatively, and perhaps more surprisingly, even within classes of arbitrarily large girth.

\begin{theorem}\label{thm:main}
  For any positive integers $t$ and $g$, there is a~$K_{\kk}^{(1)}$-induced-minor-free graph of girth at least~$g$ that is not in RIG$(\{H~:~H~\text{is}~K_t\text{-minor-free}\})$. 
\end{theorem}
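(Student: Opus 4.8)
The plan is to exhibit, for each pair $(t,g)$, an explicit graph $X=X_{t,g}$ and to verify three things: (i) $X$ has girth at least $g$; (ii) $X$ has no $K_6^{(1)}$ induced minor; (iii) $X$ is not a region intersection graph over any $K_t$-minor-free graph. Since $K_s$-minor-free graphs are $K_t$-minor-free whenever $s\le t$, property (iii) for a given $t$ implies it for all smaller values, so throughout one may take $t$ as large as convenient.

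A first point is that (ii) makes \cref{No1Sub} useless for establishing (iii). Indeed, if $H'$ is a minor of $H$ then $H'^{(1)}$ is an \emph{induced} minor of $H^{(1)}$ (subdivide each branch set, and use a single subdivision vertex per edge of $H'$ as that edge's connector; one checks no spurious adjacencies arise). Now let $\mathcal{S}$ be the finite, minor-closed set of graphs $H$ with $H^{(1)}$ an induced minor of $X$. To rule out $X\in\mathrm{RIG}(G)$ for \emph{every} $K_t$-minor-free $G$ via \cref{No1Sub}, one would need, for each such $G$, some $H\in\mathcal{S}$ with $H$ not a minor of $G$; but if every member of $\mathcal{S}$ were $K_t$-minor-free, their disjoint union would be a single $K_t$-minor-free graph having all of $\mathcal{S}$ as minors, a contradiction. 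Hence $\mathcal{S}$ would contain some $H$ with $K_t$ a minor of $H$, forcing $K_t^{(1)}$ — and therefore its induced subgraph $K_6^{(1)}$ — to be an induced minor of $X$, contradicting (ii). So (iii) must be certified by a genuinely non-induced-minor obstruction: I would use a structural feature shared by every region intersection graph over a $K_t$-minor-free graph but deliberately destroyed in $X$, namely either the $O_t(1)$-colouring with monochromatic components of bounded weak diameter recalled in the introduction, or Lee's balanced separators of size $O_t(\sqrt{\lvert E\rvert})$. Accordingly $X$ should be globally "robustly connected" — no sublinear balanced separator relative to its edge count, and no short-scale colouring — while (i) is arranged by building $X$ from a high-girth skeleton, subdividing enough if necessary.

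The heart of the matter, and the step I expect to be the main obstacle, is making robust global connectivity coexist with (ii). This is genuinely delicate because the obvious robustly connected high-girth graphs — high-girth expanders — \emph{do} contain $K_6^{(1)}$ as an induced minor: place six pairwise-distant small tree-like branch sets $B_1,\dots,B_6$, then route the fifteen connector paths $S_{ij}$ greedily through the bulk, each new one chosen to avoid the closed neighbourhood of all previously placed parts (a set of bounded size), which is possible since an expander minus a bounded set is still connected. Moreover one cannot simply "rigidify" $X$ by channelling all long-range communication through a single thin transit structure, since that would itself be a small separator. So the construction must funnel all long paths linking two of any six prescribed parts through a family of locally thin transit structures that is rich enough to preclude small separators yet "entangled" enough that no fifteen connectors among six parts can be simultaneously pairwise disjoint and non-adjacent. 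Granting such a skeleton, the proof of (ii) would be a case analysis on a hypothetical induced model $(B_1,\dots,B_6,(S_{ij})_{i<j})$ of $K_6^{(1)}$ in $X$: use the girth to keep each part locally a tree, bound how each part meets the transit structures, and conclude that two of the connectors must intersect or be adjacent, so no such model exists.

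Granting $X$ with these properties, the remaining verifications are routine: (i) holds by the choice of girth/subdivision parameters of the skeleton; and (iii) follows by combining the edge count of $X$ with the absence of small balanced separators (or of a short-scale colouring) and invoking the corresponding theorem about region intersection graphs over $K_t$-minor-free graphs, which contradicts $X\in\mathrm{RIG}(G)$ for any $K_t$-minor-free $G$.
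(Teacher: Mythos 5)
There is a genuine gap: the proposal is a programme rather than a proof. The object $X_{t,g}$ is never constructed, and you yourself identify the construction of a graph that is simultaneously ``robustly connected'' and $K_6^{(1)}$-induced-minor-free as ``the main obstacle''; neither property (ii) nor property (iii) is verified for any concrete graph, so the heart of the theorem is missing. Worse, the specific certificates you propose for (iii) cannot work as stated. The route via ``no balanced separator sublinear in the edge count'' is incompatible with (ii): as recalled in the paper's introduction, $K_6^{(1)}$-induced-minor-free $m$-edge graphs have treewidth $\widetilde{O}(\sqrt m)$ (Korhonen--Lokshtanov), hence balanced separators of size $\widetilde{O}(\sqrt m)=o(m)$, so any graph satisfying (ii) automatically has strongly sublinear balanced separators; separating RIGs over $K_t$-minor-free hosts from induced-minor-free graphs through the razor-thin margin between $O_t(\sqrt m)$ and $\widetilde{O}(\sqrt m)$ is not addressed by your sketch. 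The weak-diameter colouring route is equally unsupported: whether $K_6^{(1)}$-induced-minor-free graphs (even of large girth) admit such colourings is exactly one of the open hopes alluded to in the introduction, so exhibiting an $X$ that provably lacks one would be at least as hard as the theorem itself, and nothing in the proposal does it. Finally, your preliminary argument that \cref{No1Sub} is ``useless'' only shows it cannot be applied to the whole host in one shot; it does not rule out localized uses.

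For contrast, the paper proceeds quite differently. It builds a Pohoata--Davies-grid-like graph: $n$ disjoint copies of $A_n^{(1)}$ (the $1$-subdivision of the apex grid), ordered along a Hamiltonian path of an auxiliary $K_6$-minor-free supergraph $B'_n$, with $n$ long anti-complete paths attached across the whole order. Induced-minor-freeness (\cref{clm:imf}) is proved by converting any hypothetical induced model of $K_6^{(1)}$ into a $K_6$ minor model in $B'_n$, a contradiction. Non-membership in RIG over $K_t$-minor-free hosts (\cref{lem:not-rig}) uses \cref{GMST}: the $n$ pairwise anti-complete connecting paths defeat the adhesion bound $\ell$ and force all regions of $V(B_n)$ into a single bag; since the torso has at most $\ell$ apices, some copy of $A_n^{(1)}$ has all its regions in an apex-free almost-embeddable part, which is $A_n$-minor-free by \cref{NoApexMinor}, and then \cref{No1Sub} applied to that part yields the contradiction. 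So Lee's lemma remains the engine, but localized by the structure theorem---precisely the step your proposal replaces with an unproven (and, in the separator variant, unattainable) quantitative obstruction.
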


The bridge between induced-minor-freeness and minor-freeness (if it exists) is not given straightforwardly by region intersection graphs. Our construction for proving \cref{thm:main} is an extension of the so-called \emph{Pohoata--Davies grids}~\cite{Davies22,Pohoata14} (see~\cref{fig:pohoata-grid}), a key family of graphs in the study of induced subgraphs and tree-decompositions.

Hopefully, our construction steers the search for a~link between induced-minor-freeness and minor-freeness in a~more fortunate direction.

\begin{figure}[h!]
	\centering
	\begin{tikzpicture}[vertex/.style={draw,circle,inner sep=0.03cm}]
		\def\k{6}
		\pgfmathtruncatemacro\km{\k - 1}
		\def\sv{0.5}
		\def\sh{0.8}
		
		\foreach \i in {1,...,\k}{
			\foreach \j in {1,...,\k}{
				\node[vertex] (a\i\j) at (\i * \sh, \j *\sv) {};
			}
		}
		\foreach \i in {1,...,\k}{
			\node[vertex] (s\i) at (\i * \sh, \k * \sv + \sv) {};
		}
		
		\foreach \i [count = \ip from 2] in {1,...,\km}{
			\foreach \j in {1,...,\k}{
				\draw (a\i\j) -- (a\ip\j) ;
			}
		}
		
		\foreach \i in {1,...,\k}{
			\foreach \j in {1,...,\k}{
				\draw (s\i) to [bend left = 20] (a\i\j) ;
			}
		}
	\end{tikzpicture}
	\caption{The Pohoata--Davies $6 \times 6$ grid.}
	\label{fig:pohoata-grid}
\end{figure}
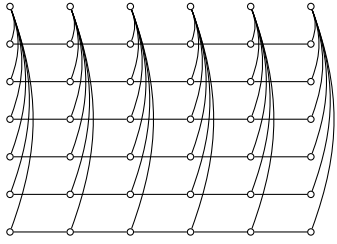

\section{Preliminaries}\label{sec:prelim}

Given an integer~$i$, we denote by $[i]$ the set of integers that are at least 1 and at~most~$i$. 

\subsection{Standard graph-theoretic notation}

We denote by $V(G)$ and $E(G)$ the set of vertices and edges of a graph $G$, respectively.
A~graph $H$ is a~\emph{subgraph} of a~graph $G$ if $H$ can be obtained from $G$ by vertex and edge deletions.
Graph~$H$ is an~\emph{induced subgraph} of $G$ if $H$ is obtained from $G$ by vertex deletions only.
For $S \subseteq V(G)$, the \emph{subgraph of $G$ induced by $S$}, denoted $G[S]$, is obtained by removing from $G$ all the vertices that are not in $S$.
Then $G-S$ is a short-hand for $G[V(G)\setminus S]$.

A~set $X \subseteq V(G)$ is connected (in $G$) if $G[X]$ has a~single connected component.
The \emph{girth} of a~graph is the number of vertices of one of its shortest cycles, and $\infty$ if the graph is acyclic.
A~graph class is \emph{weakly sparse} if it excludes $K_{t,t}$ as a~subgraph for some finite integer~$t$.
A~\emph{balanced separator} of an $n$-vertex graph $G$ is a~set $X \subseteq V(G)$ such that $G-X$ has no connected component on more than $n/2$ vertices. 

If $G$ is a~graph and $\ell$ is a~positive integer, then $G^{(\ell)}$ denotes the \emph{$\ell$-subdivision} of $G$ (replacing every edge of $G$ by a~path with $\ell+1$ edges), and $\ell G$ denotes the graph obtained from $\ell$ disjoint copies of $G$. We call the original vertices of $V(G)$ in $G^{(\ell)}$ \emph{branching vertices}, and the added vertices (which have degree 2) \emph{subdivision vertices}.

We say that two disjoint sets $X,Y\subseteq V(G)$ are \emph{anti-complete} if there is no edge in~$G$ with one end in~$X$ and the other in~$Y$.

The \emph{diameter of $G$} is defined as $\max_{u,v \in V(G)} d_G(u,v)$, where $d_G(u,v)$ is the number of edges in a~shortest path between $u$ and~$v$.
The \emph{weak diameter of $S$ in $G$} for $S \subseteq V(G)$ is equal to $\max_{u,v \in S} d_G(u,v)$.

\subsection{Tree-decomposition}

A \emph{tree-decomposition} of a graph $G$ is a collection $\TT=(W_x :x\in V(T))$ of subsets of $V(G)$ (called \emph{bags}) indexed by the vertices of a tree $T$, such that
\begin{compactitem}
\item for every edge $uv\in E(G)$, some bag $W_x$ contains both $u$ and $v$, and
\item for every vertex $v\in V(G)$, the set $\{x\in V(T):v\in W_x\}$ induces a non-empty (connected) subtree of $T$.
\end{compactitem}
  \medskip
  The \emph{width} of~$\TT$ is $\max\{|W_x| \colon x\in V(T)\}-1$. The \emph{treewidth} of $G$ is the minimum width of a tree-decomposition of $G$. The \emph{adhesion} of~$\TT$ is $\max\{|W_x\cap W_y| \colon xy\in E(T)\}$.
  The \emph{torso} of a bag $W_x$ (with respect to $\TT$), denoted by \emph{$\torso{G}{W_x}$}, is the graph obtained from the induced subgraph $G[W_x]$ by adding edges so that $W_x\cap W_y$ is a~clique for each edge $xy\in E(T)$. 
A~\emph{path-decomposition} is a tree-decomposition in which the underlying tree is a path, simply denoted by the corresponding sequence of bags $(W_1, \ldots, W_n)$.

\subsection{Minors, induced minors, and region intersection graphs}\label{subsec:m-im-rig}

A~graph $H$ is a \emph{minor} of a~graph $G$ if $H$ is isomorphic to a~graph that can be obtained from a subgraph of $G$ by contracting edges. Equivalently, $H$ is a minor of $G$ if there exists a model $\mathcal{M}=(X_v \subseteq G\colon v\in V(H))$ of $H$ in $G$ which is a collection of disjoint connected subgraphs of $G$ such that $X_u$ and $X_v$ are adjacent whenever $uv\in E(H)$. Each $X_u$ is called a \emph{branch set}. A~graph $H$ is an \emph{induced minor} of a~graph $G$ if $H$ is isomorphic to a~graph that can be obtained from an induced subgraph of $G$ by contracting edges. Equivalently, $H$ is an induced minor of $G$ if there is a model $\mathcal{M}=(X_v \subseteq G \colon v\in V(H))$ of $H$ in $G$ with the additional constraint that $X_u$ and $X_v$ are adjacent if and only if $uv\in E(H)$. A~graph~$G$ is \emph{$H$-minor-free} (resp.~\emph{$H$-induced-minor-free}) if~$H$ is not a~minor (resp.~an induced minor) of~$G$.

Recall that a graph $G$ is a~\emph{region intersection graph} over a~graph $H$ if there exists a collection $\mathcal{R}=(R_v \subseteq H \colon v\in V(G))$ of connected subgraphs of $H$ such that $uv \in E(G)$ if and only if $V(R_u)\cap V(R_v)\neq \emptyset$. 
We denote by RIG$(H)$ the class of graphs that are region intersection graphs over~$H$.
By extension, given a~graph class~$\mathcal C$, RIG$(\mathcal C)$ denotes the class of graphs that are region intersection graphs over some graph of~$\mathcal C$.

\subsection{Graph minor structure theorem}

The Graph Minor Structure Theorem of Robertson and Seymour~\cite{robertson2003graph} states that every $K_t$-minor-free graph has a tree-decomposition with bounded-size adhesion such that each torso can be constructed using three ingredients: graphs on surfaces, vortices, and apices.
To describe this formally, we need the following definitions. 

Let $G_0$ be a graph embedded in a surface $\Sigma$. 
A~closed disk $D$ in $\Sigma$ is \emph{$G_0$-clean} if its only points of intersection with $G_0$ are vertices of $G_0$ that lie on the boundary of $D$.
Let $x_1,\dots,x_b$ be the vertices of $G_0$ on the boundary of $D$ in the order around $D$. A \emph{$D$-vortex} (with respect to $G_0$) of a graph $H$ is a path-decomposition $(W_1, \dots, W_b)$ of $H$ such that $x_i\in W_i$ for each $i\in[b]$, and $V(G_0\cap H)=\{x_1,\dots,x_b\}$.

For integers $g,p,a \geq 0$ and $k\geq 1$, a graph $G$ is \emph{$(g,p,k,a)$-almost-embeddable} if for some set $Z\subseteq V(G)$ with $|Z|\leq a$, there are graphs $G_0,G_1,\dots,G_p$ such that:
\begin{compactitem}
	\item $G-Z = G_{0} \cup G_{1} \cup \cdots \cup G_p$,
	\item $G_{1}, \dots, G_p$ are pairwise vertex-disjoint,
	\item $G_{0}$ is embedded in a surface $\Sigma$ of Euler genus at most $g$,
	\item there are $p$ pairwise disjoint $G_0$-clean closed disks $D_1,\dots,D_p$ in $\Sigma$, and
	\item for $i\in[p]$, there is a $D_i$-vortex $(W_1,\dots,W_{b_i})$ of $G_i$ of width at most $k$.
\end{compactitem}
\medskip
The vertices in $Z$ are called \emph{apex} vertices---they can be adjacent to any vertex in $G$. A graph is \emph{$\ell$-almost-embeddable} if it is $(g,p,k,a)$-almost-embeddable for some $\ell \geq g,p,k,a$.
A graph is \emph{apex-free $\ell$-almost-embeddable} if it is $(g,p,k,0)$-almost-embeddable for some $\ell \geq g,p,k$.

\begin{theorem}[\cite{robertson2003graph}]\label{GMST}
	For every positive integer $t$, there exists an integer $\ell$ such that every $K_t$-minor-free graph has a tree-decomposition of adhesion at most $\ell$ such that each torso is $\ell$-almost-embeddable.
\end{theorem}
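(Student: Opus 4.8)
The plan is to prove \cref{GMST} by induction on the number of vertices of the $K_t$-minor-free graph $G$, with $t$ fixed, where $\ell$ denotes a sufficiently large function of $t$ that we are free to enlarge finitely many times. First dispose of the easy case: if $\tw(G) \le f(t)$ for a suitable bound $f$, then any optimal tree-decomposition of $G$ has all bags of size at most $f(t)+1$, hence adhesion at most $f(t)$ and every torso on at most $f(t)+1$ vertices, which is trivially $\ell$-almost-embeddable provided $\ell \ge f(t)+1$. So assume from now on that $\tw(G) > f(t)$.

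By the Excluded Grid Theorem of Robertson and Seymour~\cite{GraphMinors} (large treewidth forces, as a subgraph, a subdivision of a large wall), $G$ contains a subdivision $W$ of a wall whose order $r = r(t)$ can be taken as large as we need. Now invoke the Flat Wall Theorem: since $K_t$ is not a minor of $G$, there is a set $Z \subseteq V(G)$ with $|Z| \le a(t)$ and a subwall $W'$ of $W$, still of large order, that is \emph{flat} in $G - Z$ --- the subgraph enclosed by $W'$ together with $W'$ embeds in a closed disk with the perimeter of $W'$ on the boundary, and this perimeter separates the disk (the \emph{compass} $C$ of $W'$) from the rest of $G - Z$.

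The core of the argument is to analyse how the remainder of $G - Z$ attaches to $C$. One establishes a dichotomy: either the attachments meet $C$ along only a bounded number of faces of its planar drawing, and within each such face the attached part admits a path-decomposition of bounded width along the face's boundary cycle --- a bounded-depth vortex --- so that a large piece $G_{\mathrm{core}}$ built around $W'$, together with the $\le a(t)$ vertices of $Z$ as apices, is $\ell$-almost-embeddable on a surface of bounded Euler genus; or else the attachments are spread out enough that, routing many disjoint paths through the high-order wall $W'$ (linkage arguments, societies), one builds a model of $K_t$ in $G$, a contradiction. That the host surface has bounded Euler genus in the first alternative again uses $K_t$-minor-freeness, since a surface of large genus would carry a $K_t$ minor near $W'$. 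This produces a separation of $G$ into the almost-embeddable piece $G_{\mathrm{core}}$ and side pieces attached along interfaces of bounded size, each side piece being $K_t$-minor-free and strictly smaller than $G$ (or else reducible to such a case), so the induction hypothesis supplies each with a tree-decomposition of adhesion at most $\ell$ whose torsos are $\ell$-almost-embeddable.

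It remains to assemble these: make $G_{\mathrm{core}}$ the torso of a new central bag, glue onto it the tree-decompositions of the side pieces along their bounded-size interface cliques, and check that the adhesion on every resulting edge stays at most $\ell$ and that every torso --- including that of the central bag, once the interface cliques are incorporated as further apex or vortex-boundary attachments --- remains $\ell$-almost-embeddable for a suitably updated $\ell = \ell(t)$. I expect the attachment analysis around the flat wall to be by far the main obstacle: making the ``confined versus spread-out'' dichotomy quantitative, extracting bounded-depth vortices from the confined attachments, and simultaneously certifying a host surface of bounded Euler genus. This is the technical heart of the Graph Minors series~\cite{GraphMinors,robertson2003graph}, where essentially all of the difficulty (the Flat Wall Theorem itself, societies, linkages, the vortex-extraction lemmas) is concentrated; by comparison, the bounded-treewidth reduction, the appeals to the excluded-grid and flat-wall theorems, and the final bookkeeping of adhesions are comparatively routine.
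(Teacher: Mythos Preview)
The paper does not prove \cref{GMST}; it is quoted as a known result of Robertson and Seymour~\cite{robertson2003graph} and used as a black box. There is therefore no ``paper's own proof'' to compare against.

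Your outline is a faithful high-level summary of the genuine Robertson--Seymour strategy (large treewidth $\Rightarrow$ large wall $\Rightarrow$ flat wall after removing a bounded apex set $\Rightarrow$ society/vortex analysis $\Rightarrow$ inductive gluing along bounded-adhesion separations), and nothing in it is wrong as a roadmap. But, as you explicitly concede in your final paragraph, it is a plan and not a proof: every step that carries real content---the quantitative attachment dichotomy, the extraction of bounded-depth vortices, the genus bound, the linkage arguments that produce a $K_t$ minor in the ``spread-out'' case---is deferred wholesale to~\cite{GraphMinors,robertson2003graph}. That is entirely appropriate here, since the paper only needs the statement, not a reproof; just be aware that what you have written would not stand on its own as a proof of the theorem.
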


For every positive integer $n$, let $A_n$ denote the \emph{apex $n \times n$ grid}; that is, the graph obtained from the $n \times n$ grid by adding a universal vertex. The next theorem concerns the structure of apex-minor-free graphs.
The statement is implied by a characterization of apex-minor-free graphs \cite[Theorem~25, (6) $\Rightarrow$ (5)]{dujmovic2017layered}.

\begin{theorem}[\cite{dujmovic2017layered}]\label{NoApexMinor}
	For every positive integer $\ell$, there exists some integer $n$ such that every graph that has a tree-decomposition of adhesion at most $\ell$ where each torso is apex-free $\ell$-almost-embeddable is $A_n$-minor-free.
\end{theorem}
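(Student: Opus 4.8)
The plan is to route the argument through the notion of \emph{layered treewidth} of Dujmovi\'c, Morin, and Wood~\cite{dujmovic2017layered}. A~\emph{layering} of a~graph is an ordered partition $(L_0,L_1,\dots)$ of its vertex set such that every edge has both ends in a~single part or in two consecutive parts; the \emph{layered treewidth} of a~graph is the least $k$ for which there exist a~layering $(L_i)$ and a~tree-decomposition $(W_x)$ with $|W_x\cap L_i|\le k$ for all $x$ and~$i$. I would establish three facts: (i)~a~graph satisfying the hypothesis of the statement has layered treewidth at~most some $L(\ell)$ depending only on~$\ell$; (ii)~the apex grid $A_n$ has layered treewidth at~least $\lceil (n+1)/3\rceil$; (iii)~layered treewidth does not increase under taking minors. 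Granting these, set $n:=3\,L(\ell)+1$. If $A_n$ were a~minor of a~graph $G$ as in the hypothesis, then by~(iii) and~(i) its layered treewidth would be at~most $L(\ell)<\lceil(n+1)/3\rceil$, contradicting~(ii); so $G$ is $A_n$-minor-free, which is what we want.

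Step~(ii) is elementary. Fix any layering $(L_i)$ and tree-decomposition $(W_x)$ of $A_n$. The apex vertex $u$ lies in some part $L_j$, and every other vertex of $A_n$, being adjacent to~$u$, lies in $L_{j-1}\cup L_j\cup L_{j+1}$. Deleting $u$ from every bag turns $(W_x)$ into a~tree-decomposition of $A_n-u$, which is the $n\times n$ grid and hence has treewidth~$n$; therefore some bag meets $V(A_n)\setminus\{u\}\subseteq L_{j-1}\cup L_j\cup L_{j+1}$ in at~least $n+1$ vertices, and so meets one of $L_{j-1},L_j,L_{j+1}$ in at~least $\lceil(n+1)/3\rceil$ vertices.

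Step~(i) is the structural part; observe that the hypothesis is precisely the conclusion of~\cref{GMST} with the apex sets forbidden. One proceeds in stages: graphs of Euler genus~$g$ have layered treewidth $O(g)$~\cite{dujmovic2017layered}; a~vortex of width $\le\ell$ has pathwidth, hence treewidth, at~most~$\ell$, so it has layered treewidth at~most~$\ell+1$ (use the single-part layering); attaching at~most~$\ell$ such vortices along pairwise disjoint clean disks to a~surface piece of bounded layered treewidth keeps the layered treewidth bounded; and finally, gluing the torsos together along the tree-decomposition---that is, a~sequence of clique-sums over cliques of size $\le\ell$---preserves bounded layered treewidth. Crucially, the absence of apices is what makes this work: a~single apex vertex is adjacent to all of the surface piece~$G_0$, which would confine $G_0$ to three consecutive layers and re-create exactly the obstruction exploited in step~(ii).

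The technical heart---and the part I expect to be the main obstacle---is the vortex bookkeeping inside step~(i): a~vortex is attached along a~long linearly ordered sequence of vertices of~$G_0$ that may lie in widely separated layers of a~breadth-first layering of~$G_0$, so the combined layering and tree-decomposition must be assembled with some care, and the same is true of the clique-sum step. A~secondary delicate point is the exact form of minor-monotonicity used in~(iii), where the layering of the minor is re-chosen from that of the host graph rather than inherited directly. All of these ingredients are provided by~\cite{dujmovic2017layered}, which is why the paper takes the statement from there.
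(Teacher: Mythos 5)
Your toolkit is the right one---the paper itself gives no proof of \cref{NoApexMinor} but imports it from~\cite{dujmovic2017layered}, and layered treewidth is indeed the engine behind that characterization; your step (ii) is correct as written, and step (i) is exactly the structural direction established there. The fatal problem is step (iii): \emph{layered treewidth is not minor-monotone}, and your concluding contradiction rests entirely on transferring the bound from the host graph to its minor $A_n$. Here is a concrete counterexample. Let $G_n$ be the $n\times n$ grid with vertices $(i,j)$, plus a pendant vertex $p_{i,j}$ adjacent to $(i,j)$ for every $i,j$, where the vertices $p_{i,j}$ are additionally joined into a single path following a row-by-row snake ordering of the grid. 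Layer $G_n$ by column index (both $(i,j)$ and $p_{i,j}$ go to layer $j$); every edge of $G_n$ lies within a column or between consecutive columns, so this is a valid layering. Take the path-decomposition whose $t$-th bag consists of all $(i,j)$ and $p_{i,j}$ with $i+j\in\{t,t+1\}$; one checks that every edge is covered and every vertex appears in two consecutive bags, and each bag meets each layer in at most $4$ vertices. So $G_n$ has layered treewidth at most $4$, independent of $n$; yet contracting the connected path formed by the $p_{i,j}$'s into a single vertex produces $A_n$ as a minor, whose layered treewidth is at least $\lceil (n+1)/3\rceil$ by your own step (ii). So a graph of bounded layered treewidth can contain arbitrarily large apex-grid minors, and the implication ``ltw$(G)\le L(\ell)$ and $A_n\preceq_m G$ imply ltw$(A_n)\le L(\ell)$'' is false.

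This is precisely why the characterization in~\cite{dujmovic2017layered} is stated for \emph{minor-closed classes}: there, minor-closedness of the class substitutes for minor-monotonicity of the parameter, since the minor $A_n$ would itself belong to the class, hence inherit the structural decomposition and with it the layered treewidth bound. To repair your argument for the graph-wise statement you need an analogous substitute for (iii): for instance, show that every minor of a graph admitting a tree-decomposition of adhesion at most $\ell$ with apex-free $\ell$-almost-embeddable torsos again admits such a decomposition with parameters depending only on $\ell$ (then a hypothetical $A_n$-minor has bounded layered treewidth, contradicting (ii)), or argue directly against a hypothetical model of $A_n$ inside the decomposition (an apex branch set must meet many torsos or a vortex, etc.), which is closer to what is actually done in~\cite{dujmovic2017layered}. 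As it stands, the final step of your sketch does not go through, even granting (i) and (ii).
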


Finally, we will need the notion of \emph{clique-sum}.
Let $k$ be a~positive integer, $C_1=\{v_1,\dots,v_k\}$, a~clique in a~graph $G_1$, $C_2=\{w_1,\dots,w_k\}$, a clique in a~graph $G_2$.
A~\emph{\mbox{$k$-clique-sum}} of $G_1$ and $G_2$ is any graph $G$ obtained from the disjoint union of $G_1$ and $G_2$ by identifying $v_i$ and $w_i$ for each $i\in [k]$ and then possibly deleting some edges in $C_1$ $(=C_2)$.

\section{Proof of~\cref{thm:main}} 

In this section, we prove \cref{thm:main} first for graphs of girth~$5$. 
We then explain how the construction can be generalized so that the result holds for arbitrarily large girth.

\begin{theorem}\label{thm:main-girth5}
  For every positive integer $t$, there is a~$K_{\kk}^{(1)}$-induced-minor-free graph $G$ of girth~5 such that $G$ is not in RIG$(\{H~:~H~\text{is}~K_t\text{-minor-free}\})$. 
\end{theorem}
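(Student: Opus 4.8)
The plan is to build a single graph $G$ that (i) has girth~$5$, (ii) excludes $K_{\kk}^{(1)}$ as an induced minor, and (iii) is far too ``expansive'' to be a RIG over any $K_t$-minor-free host. The starting point is the Pohoata--Davies grid of \cref{fig:pohoata-grid}: an $m\times m$ grid together with $m$ ``selector'' vertices $s_1,\dots,s_m$, where $s_i$ is joined to every vertex of the $i$-th column. To force girth~$5$ I would subdivide once every grid edge and every selector edge (and possibly iterate the construction of several such gadgets glued together), obtaining from the basic gadget a graph $G=G_m$ whose size $m$ is chosen enormous relative to the parameters $\ell,n$ produced by \cref{GMST} and \cref{NoApexMinor} for the given $t$. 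The key combinatorial feature I want to preserve is that for any two selectors $s_i,s_j$ there are many internally disjoint short paths between them (one through each row of the grid), so $G$ contains, as a subgraph, a subdivision of a large bipartite-like ``many parallel paths'' structure; this is what will clash with the bounded-expansion consequences of being a RIG over a minor-free host.

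The induced-minor-freeness (ii) is the routine-but-delicate part: I would argue that any model of $K_{\kk}^{(1)}$ as an induced minor in $G$ would need $\kk$ pairwise-nonadjacent branch sets (the images of the branching vertices of $K_{\kk}^{(1)}$) together with $\binom{\kk}{2}$ further branch sets each adjacent to exactly two of them and pairwise nonadjacent (the images of the subdivision vertices). Using the very restricted adjacency pattern of the Pohoata--Davies grid---a grid vertex has at most two grid-neighbours in its row/column sense plus one selector, selectors form an independent set, etc.---one shows no such configuration can fit, essentially because ``being adjacent to two branching branch sets but to no other subdivision branch set'' is incompatible with the low local connectivity/degree structure of the (subdivided) grid once $\kk$ is at least~$6$. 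This is the step I expect to be the main obstacle: it is a case analysis on where the branch sets can live, and getting it clean (and robust under the girth-boosting modification) is the technical heart of the argument.

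For (iii), suppose toward a contradiction that $G\in\mathrm{RIG}(H)$ for some $K_t$-minor-free $H$, witnessed by connected subgraphs $(R_v\subseteq H : v\in V(G))$. Apply \cref{GMST} to $H$ to get a tree-decomposition of adhesion $\le\ell$ with $\ell$-almost-embeddable torsos, and then I would push this decomposition onto $G$: each bag $W_x$ of $H$ induces a set $B_x=\{v\in V(G): V(R_v)\cap W_x\ne\emptyset\}$ in $G$, and connectedness of the $R_v$'s together with the subtree property yields a tree-decomposition-like covering of $G$ whose adhesions are controlled (the selectors and the grid-paths have to route through bounded-adhesion separations of $H$). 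The point is that the Pohoata--Davies structure is engineered so that in any such low-adhesion decomposition some torso must contain a large apex grid $A_n$ as a minor---the selectors behave like apex vertices and the grid behaves like the grid---while \cref{NoApexMinor} says the torsos of an $\ell$-almost-embeddable, bounded-adhesion decomposition are $A_n$-minor-free once $n$ is large. Choosing $m\gg n\gg \ell \gg t$ makes this a genuine contradiction, proving $G\notin\mathrm{RIG}(\{H: H\text{ is }K_t\text{-minor-free}\})$.

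Finally I would remark that nothing above used girth exactly~$5$: replacing the single subdivision of each edge by a sufficiently long subdivision (and adjusting the induced-minor analysis, which only gets easier with longer subdivisions) gives girth at least $g$ for any prescribed $g$, which is the generalization promised before \cref{thm:main-girth5} and yields \cref{thm:main}.
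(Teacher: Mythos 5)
There is a genuine gap, and it is fatal to the construction itself, not just to the deferred details. Your graph is (a gluing of) the $1$-subdivision of a huge Pohoata--Davies grid, but such a graph is \emph{not} $K_{\kk}^{(1)}$-induced-minor-free. Indeed, in the Pohoata--Davies $m\times m$ grid the $m$ rows are pairwise disjoint connected paths, each meeting every selector, so contracting the rows yields a $K_{m,m}$ minor and hence a $K_{\kk}$ minor once $m\geq 5$. Moreover, for any graphs $F$ and $H$, if $H$ is a minor of $F$ then $H^{(1)}$ is an \emph{induced} minor of any full subdivision $F^{(g)}$: take as branch sets the subdivided branch sets of the minor model plus, for each edge of $H$, the interior of one subdivided connecting edge; all unwanted adjacencies disappear because every edge of $F$ has been subdivided. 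Applied with $F$ the Pohoata--Davies grid and $H=K_{\kk}$, this shows your $G_m$ contains $K_{\kk}^{(1)}$ as an induced minor for every girth-boosting subdivision length, so the step you flag as ``the technical heart'' cannot be carried out---it is false. The paper avoids exactly this trap: the payload attached to the Pohoata--Davies-style paths is $n$ disjoint copies of $A_n^{(1)}$ (whose augmented union $B'_n$ is $K_{\kk}$-minor-free), the attachment edges and the paths themselves are \emph{not} subdivided, and \cref{clm:imf} shows any $K_{\kk}^{(1)}$ induced-minor model of $G$ projects to a $K_{\kk}$ minor model of $B'_n$, a contradiction.

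The non-RIG part is also missing its engine. You never invoke \cref{No1Sub}, which is what actually converts ``$G$ is a RIG over $H$'' into a minor of the host, and your claim that ``some torso must contain an $A_n$ minor because the selectors behave like apex vertices'' does not follow from pushing the decomposition of $H$ onto $G$: the torsos constrain $H$, not $G$, and a region $R_v$ may spread over many bags. The paper's argument needs several specific ingredients your sketch lacks: (a) every vertex of $B_n$ is attached to all $n>\ell$ pairwise anti-complete paths, so by a Helly argument no adhesion set of size $\leq\ell$ can separate two regions of $B_n$-vertices, forcing a single bag $W_x$ to meet all of them; (b) a repair step (2-clique-sums with triangles over adhesion cliques) so that $B_n$ is genuinely a RIG over a graph built from the torso $\torso{H}{W_x}$; (c) $n>\ell$ disjoint copies of $A_n^{(1)}$ so that at least one copy has all its regions avoiding the $\leq\ell$ apex vertices; and (d) \cref{NoApexMinor} plus \cref{No1Sub} applied to that copy to reach the contradiction (see \cref{lem:not-rig}). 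Since your construction contains no $A_n^{(1)}$ copies and no mechanism replacing (a)--(d), part (iii) as written is an unsupported hope rather than a proof.
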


We fix any positive integer $t$.
By \cref{GMST}, there exists some integer $\ell := \ell(t)$ such that every $K_t$-minor-free graph has a~tree-decomposition of adhesion at most $\ell$ where each torso is $\ell$-almost-embeddable. By \cref{NoApexMinor}, there exists some integer $n := n(\ell)$ such that every graph that has a tree-decomposition of adhesion at most $\ell$ where each torso is apex-free $\ell$-almost-embeddable is $A_n$-minor-free. We may assume that $n\geq \ell+1$.
  We now construct our graph $G$. 

  \medskip
  
  \textbf{Construction of \bm{$G$}.}
  Since the $n \times n$ grid is $K_5$-minor-free, the apex $n\times n$ grid $A_n$ is $K_6$-minor-free.
  Let $B_n$ be $n A_n^{(1)}$, that is, the disjoint union of $n$ copies of the 1-subdivision of~$A_n$, also equal to the 1-subdivision of the disjoint union of $n$ copies of~$A_n$.
  We now set a~total order $\prec$ of $V(B_n)$, and a~traceable (i.e., admitting a~Hamiltonian path) spanning supergraph $B'_n$ of~$B_n$, whose Hamiltonian path defines the successor relation of $\prec$.

  The vertices of each copy of $A_n^{(1)}$ appear consecutively along~$\prec$.
  The graph $B'_n$ is obtained by adding to each copy of $A_n^{(1)}$ the red edges of~\cref{fig:Bn-Bpn}.
  Note that this includes an edge between the top-left vertex of the grid and the apex of the previous copy of~$A_n^{(1)}$ (leftmost vertex in the figure).
  The order $\prec$ within each~$A_n^{(1)}$ is given by the Hamiltonian path in blue, starting at the top-left vertex of the grid to the apex. 
  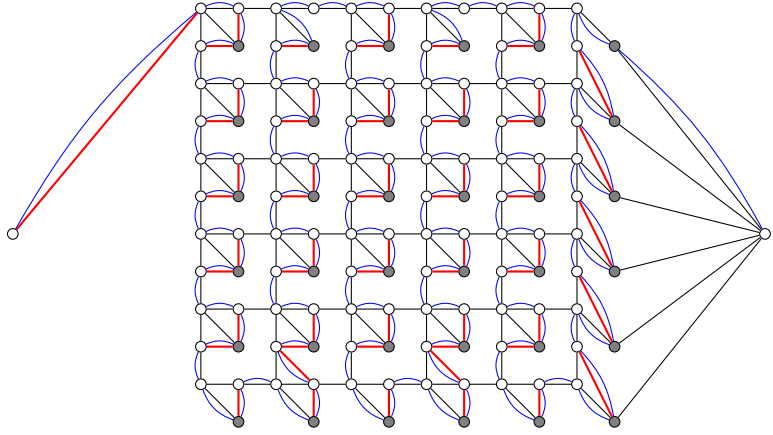
\begin{figure}[h!]
    \centering
    \begin{tikzpicture}[vertex/.style={draw,circle,inner sep=0.05cm}]
    \def\t{6}
    \pgfmathtruncatemacro\tm{\t-1}
    \pgfmathtruncatemacro\tmm{\tm - 1}
    \pgfmathtruncatemacro\ht{\t / 2}
    \def\s{1}
    \def\bb{25}

    \foreach \i in {1,...,\t}{
      \foreach \j in {1,...,\t}{
        \node[vertex] (v\i\j) at (\i * \s, \j * \s) {} ;
      }
    }

    \foreach \i in {1,...,\t}{ 
      \foreach \j in {1,...,\tm}{ 
        \pgfmathtruncatemacro\jnext{\j+1}
        \node[vertex] (h\i\j) at (\i * \s, \j * \s + 0.5*\s) {} ; 
        \draw (v\i\j) -- (h\i\j) ;
        \draw (h\i\j) -- (v\i\jnext) ;
      } 
    } 

    \foreach \i in {1,...,\tm}{ 
      \foreach \j in {1,...,\t}{ 
        \pgfmathtruncatemacro\inext{\i+1}
        \node[vertex] (w\i\j) at (\i * \s + 0.5*\s, \j * \s) {} ; 
        \draw (v\i\j) -- (w\i\j) ;
        \draw (w\i\j) -- (v\inext\j) ;
      } 
    }

    \foreach \i in {1,...,\t}{ 
      \foreach \j in {0,...,\tm}{ 
        \pgfmathtruncatemacro\inext{\i+1}
        \pgfmathtruncatemacro\jnext{\j+1}
        \node[vertex,fill=gray] (c\i\j) at (\i * \s + 0.5*\s, \j * \s + 0.5*\s) {} ;
        \draw (v\i\jnext) -- (c\i\j) ;
      } 
    }

    \foreach \i in {1,...,\tm}{ 
      \foreach \j in {1,...,\tm}{ 
        \draw[thick, red] (h\i\j) -- (c\i\j) ;
        \draw[blue] (h\i\j) to [bend left=\bb] (c\i\j) ;
      } 
    }

    \foreach \i in {1,...,\tm}{ 
      \foreach \j in {1,...,\tmm}{ 
        \pgfmathtruncatemacro\jnext{\j+1}
        \draw[thick, red] (w\i\jnext) -- (c\i\j) ;
        \draw[blue] (w\i\jnext) to [bend left=\bb] (c\i\j) ;
      } 
    }
    
    \foreach \i in {1,3,...,\tm}{ 
      \draw[thick, red] (w\i\t) -- (c\i\tm) ;
      \draw[blue] (w\i\t) to [bend left=\bb] (c\i\tm) ;
    }

     \foreach \i in {1,...,\tm}{ 
       \draw[thick, red] (w\i1) -- (c\i0) ;
       \draw[blue] (w\i1) to [bend left=\bb] (c\i0) ;
     }

     \foreach \i in {2,4,...,\tm}{ 
       \draw[thick, red] (w\i1) -- (h\i1) ;
       \draw[blue] (w\i1) to [bend left=\bb] (h\i1) ; 
     }

     \foreach \i [count=\im from 0] in {1,...,\tm}{
       \draw[thick, red] (c\t\im) -- (h\t\i) ;
       \draw[blue] (c\t\im) to [bend left=-16] (h\t\i) ;
     }

     \foreach \i in {1,...,\tm}{
       \foreach \j in {2,...,\t}{
         \draw[blue] (v\i\j) to [bend left=\bb] (w\i\j) ;
       }
     }
     \foreach \i in {2,...,\tm}{
       \foreach \j in {1,...,\tm}{
         \draw[blue] (v\j\i) to [bend left=\bb] (h\j\i) ;
       }
     }

     \foreach \i in {1,...,\t}{
       \draw[blue] (v\i1) to [bend left=-\bb] (c\i0) ;
     }
     
     \foreach \i in {1,3,...,\tm}{
       \draw[blue] (v\i1) to [bend left=\bb] (h\i1) ;
     }
     \foreach \i in {1,3,...,\tm}{
       \pgfmathtruncatemacro{\ip}{\i+1}
       \draw[blue] (w\i1) to [bend left=\bb] (v\ip1) ;
     }

      \foreach \i in {2,4,...,\tm}{
       \pgfmathtruncatemacro{\ip}{\i+1}
       \draw[blue] (w\i\t) to [bend left=\bb] (v\ip\t) ;
      }

     \foreach \i in {2,4,...,\tm}{
       \draw[blue] (v\i\t) to [bend left=\bb] (c\i\tm) ;
     }

     \foreach \i in {2,...,\t}{
       \pgfmathtruncatemacro{\im}{\i-1}
       \draw[blue] (v\t\i) to [bend left=-\bb] (c\t\im) ;
       \draw[blue] (v\t\i) to [bend left=-\bb] (h\t\im) ;
     }

     \node[vertex] (a1) at (-1.5 * \s, \t/2 * \s) {} ;
     \node[vertex] (a2) at (\t * \s + 2.5 * \s, \t/2 * \s) {} ;

     \draw[thick, red] (a1) -- (v1\t) ;
     \draw[blue] (a1) to [bend left=10] (v1\t) ;
     \foreach \i in {0,...,\tm}{
       \draw (c\t\i) -- (a2) ;
     }
     \draw[blue] (c\t\tm) to [bend left=10] (a2) ;
    \end{tikzpicture}
    \caption{The graphs $B_n, B'_n$ and the order $\prec$.
      We only represented one entire copy of~$A_n^{(1)}$.
      Black edges represent $B_n$.
      Together with the red edges, they form $B'_n$.
      Every vertex filled in gray is adjacent to the apex vertex to the right (we only drew some of these edges for legibility).
    The Hamiltonian path of $B'_n$ in blue defines the successor relation of~$\prec$.}
    \label{fig:Bn-Bpn}
  \end{figure}
  Like $B_n$, the graph $B'_n$ is also $K_6$-minor-free.
  The graph $B'_n$ is not part of the construction and we will only use it in the proof of~\cref{clm:imf}.
  
  To finish the construction, we add to $B_n$ the disjoint union of $n$ paths $P_1, \ldots, P_{n}$ of length $2|V(B_n)|-1$, and make for every $i \in [|V(B_n)|]$ and $j \in [n]$, the $(2i-1)$-st vertex of $P_j$, denoted by $p_{j, i}$, adjacent to the $i$-th vertex of $B_n$ along~$\prec$, denoted by $b_i$.
  Call $G$ the resulting graph. As a side note, if we replaced each copy of $A_n^{(1)}$ in $G$ by $K_1$, then the graph obtained is a Pohoata--Davies Grid (see \cref{fig:pohoata-grid}).

  The following three lemmas prove~\cref{thm:main-girth5}. 
  
  \begin{lemma}\label{clm:girth}
    $G$ has girth at~least~5.
  \end{lemma}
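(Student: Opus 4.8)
The plan is to classify where a shortest cycle of $G$ can live, using the partition of $V(G)$ into $V(B_n)$ on one side and all path vertices $\bigcup_{j\in[n]} V(P_j)$ on the other. First I would collect the structural facts the argument rests on: every edge of $G$ is an edge of $B_n$, an edge of some $P_j$, or one of the added edges $b_i p_{j,i}$; each $P_j$ is a path, hence acyclic; $B_n$ has girth at least $6$, being the $1$-subdivision of a simple graph; and the only path vertices incident to an added edge are the odd-position vertices $p_{j,i}$, each $p_{j,i}$ has exactly one neighbour outside $P_j$ (namely $b_i$), and the map $i\mapsto b_i$ is injective.

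Next I would take an arbitrary cycle $C$ of $G$ and branch on whether $C$ uses an added edge. If it does not, then $C$ stays inside $B_n\sqcup P_1\sqcup\dots\sqcup P_n$, and since the $P_j$ are acyclic, $C$ is a cycle of $B_n$, so $|C|\ge 6$. If $C$ does use an added edge, then --- every added edge having one endpoint in $V(B_n)$ and one in a path --- $C$ uses an even number $2a\ge 2$ of them; deleting these edges cuts $C$ into $2a$ arcs that alternate between arcs contained in $B_n$ (``$B_n$-arcs'', possibly a single vertex $b_i$ carrying two added edges of $C$) and arcs contained in a single $P_j$ (``path-arcs''). A path-arc has both endpoints in odd position of its path, and these two endpoints are distinct because a path vertex is incident to only one added edge; since two distinct odd-position vertices of a path are at distance at least $2$, every path-arc has length at least $2$. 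Summing arc lengths and the $2a$ added edges gives $|C|\ge 2a+2a=4a\ge 4$.

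Finally I would rule out $|C|=4$: it would force $a=1$, one path-arc of length exactly $2$ with endpoints $p_{j,k}$ and $p_{j,k+1}$, and a trivial $B_n$-arc at a single vertex $b_i$ incident to both $b_i p_{j,k}$ and $b_i p_{j,k+1}$; but then $b_i=b_k$ and $b_i=b_{k+1}$, contradicting $b_k\ne b_{k+1}$. Hence every cycle of $G$ has length at least $5$. I expect the only point requiring care to be the bookkeeping in the second step --- that the arcs really do alternate and that each path-arc is non-trivial, hence of length at least $2$ --- while the girth-$6$ claim for $B_n$ and the final contradiction are routine.
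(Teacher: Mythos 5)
Your proof is correct and follows essentially the same route as the paper's: cycles avoiding the path vertices live in $B_n$ and have length at least $6$, while any cycle through the paths contains a path segment of length at least $2$ between two distinct attachment vertices $p_{j,i}$, $p_{j,i'}$, whose $B_n$-neighbours $b_i \neq b_{i'}$ are distinct (your injectivity of $i \mapsto b_i$ is exactly the paper's ``no two vertices in the same path share a neighbour in $V(B_n)$''), forcing length at least $5$. Your arc-decomposition bookkeeping is just a more explicit write-up of the same idea.
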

  \begin{proof}
    $B_n$ is the 1-subdivision of a~simple graph, hence has girth at~least~6.
    $G - V(B_n)$ is a~disjoint union of paths, thus does not contain any cycle.
    Any cycle going through $V(G) \setminus V(B_n)$ has at~least two consecutive edges within $G - V(B_n)$.
    We conclude as no distinct pair of vertices within the same connected component of~$V(G) \setminus V(B_n)$ shares a~neighbor in $V(B_n)$. 
  \end{proof}

  \begin{lemma}\label{clm:imf}
    $G$ is $K_{\kk}^{(1)}$-induced-minor-free.
  \end{lemma}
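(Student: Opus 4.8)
The plan is to argue by contradiction: suppose $\M = (X_v : v \in V(K_6^{(1)}))$ is a model of $K_6^{(1)}$ as an induced minor in $G$. First I would note that $K_6^{(1)}$ has $6$ branching vertices (of degree $5$) and $15$ subdivision vertices (of degree $2$), and that it has girth~$6$, so it is certainly not a minor of any $K_6$-minor-free graph; in particular it is not an induced minor of $B_n$, nor of $B'_n$, by Lemma~\ref{No1Sub} together with the fact that $B'_n$ is $K_6$-minor-free. The key structural point to establish is therefore that the pendant paths $P_1,\dots,P_n$ cannot help: the model $\M$, when restricted to $G[V(B_n)]$, must already "essentially" witness $K_6^{(1)}$, contradicting the previous sentence. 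Concretely, I would show that each branch set $X_v$ of a branching vertex $v$ of $K_6^{(1)}$ must intersect $V(B_n)$, and more importantly that two such branch sets can be adjacent \emph{only} via an edge inside $B_n$ (never through a pendant-path vertex), because a vertex $p_{j,i}$ of a pendant path has, besides its (at most two) neighbors on $P_j$, only the single neighbor $b_i$ in $B_n$.

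The heart of the argument is analyzing how branch sets can use the pendant paths. Each $P_j$ is a path, and the vertices of $B_n$ attached to it, namely $b_1, b_2, \dots$ along $\prec$, are attached at the \emph{equally spaced} positions $p_{j,1}, p_{j,2}, \dots$ (the odd-indexed vertices), with exactly one "free" degree-$2$ vertex of $P_j$ between consecutive attachment points. I would argue: if a connected branch set $X_v$ contains a subpath of some $P_j$ together with vertices of $B_n$, then because each attachment vertex $b_i$ sees only $p_{j,i}$ on $P_j$, the set of $b_i$'s reachable "through $P_j$" inside $X_v$ is exactly those whose $\prec$-index falls in the interval covered by $X_v \cap P_j$ — i.e.\ $X_v$ behaves, along $P_j$, like an interval of $\prec$. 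The crucial consequence is that if two branch sets $X_u, X_v$ both meet $V(B_n)$ and are made adjacent only through $P_j$ (i.e.\ with no $B_n$-edge between them), then their $B_n$-parts, viewed through $\prec$, are "$\prec$-consecutive" in a way that is realized by an edge of the traceable supergraph $B'_n$ (this is exactly why $B'_n$ and its Hamiltonian path were introduced). Hence one can convert $\M$ into a model of $K_6^{(1)}$ as an induced minor in $B'_n$: replace each $X_v$ by $X_v \cap V(B_n)$ (which I must check is still connected — here one uses that $X_v$ meets $B_n$ in a $\prec$-interval-like set and $B'_n$'s Hamiltonian path connects it up), and observe that all required adjacencies survive as edges of $B'_n$ while no new adjacency is created (the "induced" condition), since non-adjacent pairs of $K_6^{(1)}$ were non-adjacent in $G$ and the only $B'_n$-edges not in $B_n$ are the red Hamiltonian-path edges, which only ever connect $\prec$-consecutive vertices already lumped appropriately.

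Having produced a model of $K_6^{(1)}$ as an \emph{induced} minor — in fact even as a minor — in $B'_n$, we contradict the fact that $B'_n$ is $K_6$-minor-free (noted in the text, since $B'_n$ is obtained from $B_n = nA_n^{(1)}$ by adding the red edges and $A_n$ is $K_6$-minor-free, with the subdivision and disjoint-union structure keeping it $K_6$-minor-free), using Lemma~\ref{No1Sub} or simply that a $1$-subdivision of $K_6$ contains $K_6$ as a minor. I would need a small separate observation to handle the degenerate case where some branch set $X_v$ is entirely contained in $G - V(B_n)$, i.e.\ inside a single pendant path: such an $X_v$ is a subpath of some $P_j$ and has at most $|X_v \cap P_j|$ neighbors in $V(B_n)$, but its neighborhood in $G$ outside $P_j$ is a set of $b_i$'s with consecutive $\prec$-indices and pairwise non-adjacent in $G - (\text{that path})$; since $v$ has degree $\geq 2$ in $K_6^{(1)}$ and every vertex at distance $\leq 2$ from $v$ includes a branching vertex, a short case check rules this out (a branching vertex's branch set cannot be trapped in a path, and a subdivision vertex trapped in a path would force its two branching-vertex neighbors' branch sets to be adjacent to two non-adjacent $b_i$'s, collapsing the needed independence).

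The main obstacle I anticipate is the bookkeeping in the "$X_v$ restricts to a $\prec$-interval on each $P_j$, hence $X_v \cap V(B_n)$ stays connected in $B'_n$" step: one has to be careful that a branch set may touch several pendant paths and several regions of $B_n$, and that connectivity of $X_v$ in $G$ genuinely transfers to connectivity of $X_v \cap V(B_n)$ in $B'_n$ rather than merely in some auxiliary graph — this is precisely the reason the red edges were chosen to form a Hamiltonian path of $B'_n$ compatible with $\prec$, so the argument should go through, but it is where the care is needed. The rest (counting degrees and girth in $K_6^{(1)}$, and invoking $K_6$-minor-freeness of $B'_n$) is routine.
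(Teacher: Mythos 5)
Your high-level plan coincides with the paper's: transfer the hypothetical model of $K_{\kk}^{(1)}$ from $G$ to the traceable supergraph $B'_n$, using the Hamiltonian path/order $\prec$ to replace the parts of branch sets lying on the pendant paths, and contradict $K_6$-minor-freeness of $B'_n$. But the execution has concrete gaps. First, your transfer step is wrong as stated: replacing $X_v$ by $X_v\cap V(B_n)$ does not preserve connectivity (a branch set may join two far-apart vertices of $B_n$ through a long subpath of some $P_j$ without containing any intermediate $b_i$, so its intersection with $V(B_n)$ is disconnected even in $B'_n$). What is actually needed is to \emph{add} the projected vertices $b_i$ for each $p_{j,i}\in X_v$, which is what the paper does (the sets $Y_k, Y'_k$). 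Once you do that, two new problems appear that your proposal never addresses: (i) disjointness can fail, since non-adjacent branch sets may contain $p_{j,i}$ and $p_{j',i}$ with the same index $i$ on different paths, both projecting to $b_i$ (the paper resolves this with an explicit tie-breaking rule in the definition of $Y_k$ and \cref{clm:pairwise-disjoint}); and (ii) your claim that ``no new adjacency is created,'' needed for your stated goal of an \emph{induced} minor model of $K_6^{(1)}$ in $B'_n$, is simply false in general — projection does create new adjacencies. (This particular point is repairable, since a plain $K_6$ minor in $B'_n$ already gives the contradiction, and that is all the paper aims for.)

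The deeper missing ingredient is the combinatorial control of how the intervals of distinct branch sets on the pendant paths can interact. Your assertion that adjacency realized ``through $P_j$'' becomes $\prec$-consecutiveness of the projections, and that connectivity transfers, is only true if the intervals of distinct branch sets are not nested and no interval is covered by the union of two others (plus a subdivision singleton); otherwise the projected sets can swallow one another, and both connectivity and the required adjacencies after projection break down. Establishing exactly these facts (via degree counting against the $\kkmm$ or $\kkmmm$ remaining branch sets a path-trapped $X_k$ must still reach) is the technical heart of the paper's proof (\cref{clm:no-nested-interval,clm:no-2-cover,clm:no-triple}), and nothing in your outline supplies it. Relatedly, your ``short case check'' that a branching vertex's branch set cannot be trapped inside a pendant path is unsubstantiated and is not actually true in the generality you need — the paper does not rule such branch sets out, it handles them through the projection and only derives contradictions in the specific nested/covered configurations. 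As written, the proposal is a plausible sketch of the right strategy but does not yet constitute a proof.
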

  \begin{proof}
    Assume for the sake of contradiction that $G$ admits $K_{\kk}^{(1)}$ as an induced minor.
    We will then build a~minor model of $K_{\kk}$ in $B'_n$, which, we know, does not exist.
    
    Let $\mathcal M$ be an induced minor model of~$K_{\kk}^{(1)}$ in $G$ such that
    \begin{compactitem}
    \item every branch set of a~subdivision vertex of~$K_{\kk}^{(1)}$ is a~singleton,
    \item if such a~singleton is on some~$P_j$ and its two neighbors on $P_j$ are in the two adjacent branch sets (one in each), then the singleton cannot be a~vertex $p_{j,i}$ (it has to be a~degree-2 vertex in between some $p_{j,i}$ and $p_{j,i+1}$), and
    \item each branch set is inclusion-wise minimal.
    \end{compactitem}
    It is easy to see that this can always be done.
    Let $X_1, \ldots, X_{\kk} \in \mathcal M$ be the branch sets corresponding to the branching vertices of~$K_{\kk}^{(1)}$.
    We denote by $\{s_{k,k'}\}$ the branch set (of the subdivision vertex) adjacent to $X_k$ and $X_{k'}$, for $k \neq k' \in [\kk]$.
    For each $k \in [\kk]$, let \[Y_k := (X_k \cap V(B'_n)) \cup \{b_i~:~\exists j \in [n],~p_{j,i} \in X_k~\text{and}~\nexists k' < k \in [\kk], j' \in [n],~p_{j',i} \in X_{k'}\},~\text{and}\]
    \[Y'_k := Y_k \cup \{s_{k,k'} \in V(B'_n) \setminus (Y_1 \cup \cdots \cup Y_{\kk})~:~k<k'\}.\]
    We now show that $Y'_1, \ldots, Y'_{\kk}$ is a~minor model of~$K_{\kk}$ in~$B'_n$.

    \begin{claim}\label{clm:pairwise-disjoint}
      The sets $Y'_1, \ldots, Y'_{\kk}$ are pairwise disjoint.
    \end{claim}
    \begin{proofofclaim}
      Suppose there exists some $b_i \in Y'_k \cap Y'_{k'}$ with $k<k'$.
      From the definition of $Y_1, \ldots, Y_{\kk}$ and $Y'_1, \ldots, Y'_{\kk}$, it should be that $b_i \in X_k$ and $p_{j,i} \in X_{k'}$ for some $j \in [n]$ or that $b_i \in X_{k'}$ and $p_{j,i} \in X_k$ for some $j \in [n]$.
      But that would make $X_k$ and $X_{k'}$ adjacent.
    \end{proofofclaim}

    To further show that the sets $Y'_1, \ldots, Y'_{\kk}$ are connected and pairwise adjacent in $B'_n$, we need the following notion and claims.
    An~\emph{interval} $I$ of some~$X_k$ is a~subset of consecutive positive integers such that there is a~connected component $J$ of $G[X_k \cap V(P_j)]$ for some $j \in [n]$ such that $\{i~:~p_{j,i} \in V(J)\}=I$.

    \begin{claim}\label{clm:no-nested-interval}
      For any $k \neq k' \in [\kk]$, any interval $I$ of $X_k$, and any interval $I'$ of $X_{k'}$, it cannot be that $I \subseteq I'$ (and symmetrically $I' \subseteq I$).
      Furthermore, at most one vertex of $\{b_i~:~i \in I \cap I'\}$ can be in a~branch set of~$\mathcal M$, namely $s_{k,k'}$.
    \end{claim}
    \begin{proofofclaim}
      If $I \subseteq I'$, then $X_k$ is a~subpath of~$P_j$ for some $j \in [n]$, as otherwise $X_k$ and $X_{k'}$ would be adjacent.
      But then $X_k$ has at~most two neighbors that are not neighbors of $X_{k'}$, a~contradiction to realize the $\kkmm$ branch sets adjacent to $X_k$ but not to $X_{k'}$.
      The rest of the claim follows because $\{s_{k,k'}\}$ is the only branch set adjacent to both $X_k$ and $X_{k'}$, and $X_k$ and $X_{k'}$ are non-adjacent.
    \end{proofofclaim}

    We can extend a~bit the previous claim. 

    \begin{claim}\label{clm:no-2-cover}
      For any pairwise distinct $k, k', k'' \in [\kk]$, any interval $I$ of~$X_k$, any interval $I'$ of~$X_{k'}$, and any interval $I''$ of~$X_{k''}$, it cannot be that $I \subseteq I' \cup I''$.
      Furthermore, if $s_{k',k''} = p_{j,i}$ for some $j \in [n]$, it cannot be that $I \subseteq I' \cup I'' \cup \{i\}$.
    \end{claim}
    \begin{proofofclaim}
      Again, any such inclusion would imply that $X_k$ is a~subpath of some~$P_j$.
      But then $X_k$ has at~most two neighbors that are not neighbors of $X_{k'} \cup X_{k''} (\cup \{s_{k',k''}\})$, a~contradiction to realize the $\kkmmm$ branch sets adjacent to $X_k$ but not to $X_{k'}$ nor $X_{k''}$.
    \end{proofofclaim}

    As $\mathcal M$ is minimal, \cref{clm:no-nested-interval,clm:no-2-cover} imply in particular that there is at~most one pair $I, I'$ of intervals of $X_k, X_{k'}$ with $I \cap I' \neq \emptyset$, per $k \neq k' \in [\kk]$.
    As another direct consequence of~\cref{clm:no-nested-interval,clm:no-2-cover}, we get the following.

    \begin{claim}\label{clm:no-triple}
      For any pairwise distinct $k, k', k'' \in [\kk]$, any interval $I$ of~$X_k$ and any interval $I'$ of~$X_{k'}$ such that $I \cap I' \neq \emptyset$ and $\min(I) < \min(I')$, there is no $i \in [\min(I')-1,\max(I)+1]$ such that $p_{j,i} \in X_{k''}$ for some $j \in [n]$ (or $b_i \in X_{k''}$). 
    \end{claim}

    The next two claims complete the proof.

    \begin{claim}\label{clm:connected}
      The sets $Y'_1, \ldots, Y'_{\kk}$ are connected in $B'_n$.
    \end{claim}
     \begin{proofofclaim}
       For any $k \in [\kk]$, and any $u, v \in Y'_k$, we exhibit a~$u$--$v$ path $P$ in $B'_n$ such that $V(P) \subseteq Y'_k$.
       (As we do not need to show that $P$ is a~path, we call it so, but only argue that it is a~walk, which is~sufficient.)
       Let $u' \in V(G)$ (resp.~$v' \in V(G)$) be the vertex in $(V(G) \setminus V(B'_n)) \cap X_k$ causing that $u \in Y'_k$ (resp~$v \in Y'_k$) if this applies, or $u' := u$ (resp.~$v' := v$), otherwise.
       Let $P'$ be a~$u'$--$v'$ path in $G$ such that $V(P) \setminus \{u',v'\} \subseteq X_k$.
       Observe that $u'$ and $v'$ may be equal to some $s_{k,k'}$ with $k < k'$, and thus not be in $X_k$ themselves.
       In which case, we simply run the following arguments with their neighbors in~$P'$ (which are in~$X_k$).
       Hence, we may as well suppose that $u', v' \in X_k$.

       If $P'$ is a~subpath of some $P_j$, we have $u' = p_{j,i}$ and $v' = p_{j,i'}$, no $X_{k'}$ with $k' < k$ contains some vertex $p_{j',i}$ or $p_{j',i'}$, and no other $X_{k'}$ contains $b_{i''}$ for any $i''$ between $i$ and~$i'$.
       By \cref{clm:no-nested-interval}, it means that for any integer $i''$ between $i$ and $i'$, no $X_{k'}$ with $k' < k$ contains some vertex $p_{j',i''}$, and no other $X_{k'}$ contains~$b_{i''}$.
       In particular, all such vertices $b_{i''}$ are in~$Y'_k$, and this makes the path $P$ between $u$ and~$v$.

       More generally, the path $P'$ alternates between maximal subpaths contained in $V(G) \setminus V(B_n)$ and maximal subpaths contained in $V(B_n)$.
       The latter are kept to build $P$.
       We then mimic each maximal subpath contained in $V(G) \setminus V(B_n)$ with a~path of $B'_n$ included in~$Y'_k$, with the appropriate endpoints. 
       By~\cref{clm:no-nested-interval}, in $P'$, every maximal subpath $p_{j,i} \ldots p_{j,i'}$ in $V(G) \setminus V(B_n)$ surrounded by two subpaths in $V(B_n)$ is such that the corresponding vertices $b_i \ldots b_{i'}$ are all in $Y'_k$, hence form the desired subpath of~$P$ in~$B'_n$.

       We finally move to the case when $P'$ starts with a~subpath $u'=p_{j,i} \ldots p_{j,i'} \neq v'$ maximal in $V(G) \setminus V(B_n)$; the case when $P'$ ends with such a~maximal subpath is dealt with symmetrically.
       We know that $b_{i'} \in X_k$, no $X_{k'}$ with $k' < k$ contains some vertex $p_{j',i}$, and no other $X_{k'}$ contains some vertex $b_{i''}$ where $i''$ is between $i$ and~$i'$.
       Thus by~\cref{clm:no-nested-interval}, all the vertices $b_i \ldots b_{i'}$ are in~$Y'_k$, the desired subpath of~$P$ in~$B'_n$.
     \end{proofofclaim}

     \begin{claim}\label{clm:pairwise-adjacent}
      The sets $Y'_1, \ldots, Y'_{\kk}$ are pairwise adjacent in $B'_n$.
     \end{claim}
     \begin{proofofclaim}
       For any $k \neq k' \in [\kk]$, let $u \in X_k, u' \in X_{k'}$ be such that $u s_{k,k'}, u' s_{k,k'} \in E(G)$.

       Assume first that $s_{k,k'} = b_i$ for some $i \in [|V(B'_n)|]$.
       If at~most one $\ell \in \{k, k'\}$ (thus, at~most one $\ell \in [\kk]$) is such that $p_{j,i} \in X_\ell$ for some $j \in [n]$, then either $s_{k,k'} \in Y'_k$ and $u' \in V(B'_n)$, or $s_{k,k'} \in Y'_{k'}$ and $u \in V(B'_n)$; so~$Y'_k$ and $Y'_{k'}$ are adjacent in $B'_n$.
       If, instead, there are $j, j'$ such that $p_{j,i} \in X_k$ and $p_{j',i} \in X_{k'}$, consider the intervals $I, I'$ of $X_k, X_{k'}$ associated to~$p_{j,i}, p_{j',i}$.
       \Cref{clm:no-triple} implies that there is some $i'$ such that $b_{i'} \in Y'_k$ and $b_{i'+1} \in Y'_{k'}$; so, again, $Y'_k$ and $Y'_{k'}$ are adjacent in $B'_n$.

       We next assume that $s_{k,k'} \in V(G) \setminus V(B'_n)$.

       First consider the case both $u$ and $u'$ are also in $V(G) \setminus V(B'_n)$.
       Let $I, I'$ be their associated interval, and assume without loss of generality that $\max(I) < \min(I')$.
       By the second item of the conditions satisfied by~$\mathcal M$, $\min(I')-\max(I)=1$.
       By~\cref{clm:no-2-cover}, there is no $k'' \in [\kk] \setminus \{k,k'\}$ such that $X_{k''}$ contains some vertex $p_{j,i}$ or $b_i$ with $i \in [\max(I),\min(I')]$.
       Besides, $X_k$ (resp.~$X_{k'}$) contains no vertex $p_{j,\min(I')}$ nor $b_{\min(I')}$ (resp.~$p_{j,\max(I)}$ nor $b_{\max(I)}$).
       Therefore, $b_{\max(I)} \in Y'_k$ and $b_{\min(I')}=b_{\max(I)+1} \in Y'_{k'}$, thus $Y'_k$ and $Y'_{k'}$ are adjacent in~$B'_n$.

       Finally consider, without loss of generality, that $s_{k,k'} = p_{j,i}$, $p_{j,i-1} \in X_k$, and $b_i \in X_{k'}$.
       By~\cref{clm:no-nested-interval}, there is no $\ell \in [\kk] \setminus \{k\}$ such that $X_\ell$ contains some vertex $p_{j',i-1}$ nor~$b_{i-1}$.
       Thus $b_{i-1} \in Y'_k$.
       As $b_i \in Y'_{k'}$, we have that $Y'_k$ and $Y'_{k'}$ are adjacent.
     \end{proofofclaim}

     \Cref{clm:pairwise-disjoint,clm:connected,clm:pairwise-adjacent} imply that $Y'_1, \ldots, Y'_{\kk}$ is a~$K_6$ minor model in~$B'_n$; a~contradiction.
  \end{proof}
 
  \begin{lemma}\label{lem:not-rig}
    For every $K_t$-minor-free graph $H$, $G$ is not a region intersection graph over $H$.
  \end{lemma}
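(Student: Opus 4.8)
The plan is to assume, for contradiction, that $G \in \mathrm{RIG}(H)$ for some $K_t$-minor-free graph $H$, and then derive the existence of an $A_n$-minor in a graph obtained from $H$ by a bounded-adhesion tree-decomposition argument, contradicting the choice of $n = n(\ell)$ and $\ell = \ell(t)$. Concretely, let $\mathcal{R} = (R_v \subseteq H : v \in V(G))$ be a region representation. Apply \cref{GMST} to $H$ to get a tree-decomposition $\TT$ of adhesion at most $\ell$ with each torso $\ell$-almost-embeddable. The idea is to localize: I would find a single bag $W_x$ (or a small part of $\TT$) that "sees" an entire copy of $A_n^{(1)}$ inside $G$ in a way that forces an $A_n$-minor in the corresponding torso, and then argue that the apex part of the almost-embeddable structure cannot absorb this — i.e., there must be an apex-free $\ell$-almost-embeddable piece still containing an $A_n$-minor, contradicting \cref{NoApexMinor}. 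The key structural feature of $G$ that makes this work is that $G$ contains $n$ vertex-disjoint copies of $A_n^{(1)}$ (one per copy in $B_n$), and additionally each vertex $b_i$ of $B_n$ is attached to a long private path $P_j$ hanging off the global order $\prec$; these pendant paths $P_1, \dots, P_n$ should play the role of "robustness" against the $\le \ell$ apex vertices and the bounded-width vortices.

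The main technical engine I expect to need is a statement of the form: if $A_n$ (or $A_n^{(1)}$, which is enough since subdividing does not kill a grid-plus-apex minor in a robust sense) is an induced minor of $G$ in a strong/robust way, then $H$ has a $K_t$-minor-rich obstruction. More precisely, I would proceed as follows. The graph $B_n = n A_n^{(1)}$ together with the pendant path structure is designed so that $G$ has a large "grid-like" complexity that cannot be represented by regions in a graph whose torsos each have few apices and bounded-genus embeddings with bounded-width vortices. Since $G$ has $n$ disjoint copies of $A_n^{(1)}$ as subgraphs, any RIG representation in $H$ gives, for each copy, a collection of connected regions in $H$ whose intersection pattern is exactly $A_n^{(1)}$. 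Passing to the tree-decomposition $\TT$ of $H$: for each copy $c \in [n]$, consider the subtree $T_c$ of $T$ spanned by the bags meeting $\bigcup_{v \in \text{copy } c} R_v$. By a pigeonhole/counting argument over the $n$ copies and the structure of $\TT$, together with the fact that adhesion sets have size at most $\ell < n$ and that distinct copies of $A_n^{(1)}$ are anti-complete in $G$ (so their regions are vertex-disjoint in $H$), I would locate one copy, call it $c^\star$, whose regions all live essentially inside a single bag $W_x$ modulo a bounded-size adhesion set, or are confined to a single almost-embeddable torso.

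At that point the crux is: an $A_n^{(1)}$-pattern of connected regions inside an $\ell$-almost-embeddable torso forces an $A_n$-minor in the torso after deleting the at-most-$\ell$ apices and routing around the at-most-$p \le \ell$ vortices of width $\le \ell$. Since $A_n$ is itself the $n \times n$ grid plus one universal vertex, having $A_n$ as an induced minor of the RIG means $A_{n}$ is (roughly) realized by the region intersection pattern; the region of the universal vertex intersects all others, and the grid regions form a planar-like intersection structure. Deleting $\ell$ apices from the torso destroys at most $\ell$ of the grid rows/columns worth of structure, and $A_n$ is large enough that $A_{n-\ell} \supseteq A_{\ell+1}$-ish survives — here I use $n \ge \ell + 1$ — so the torso minus its apices, which is apex-free $\ell$-almost-embeddable, still has an $A_{n'}$-minor for some $n'$ that is large relative to $\ell$; but \cref{NoApexMinor} says this torso is $A_n$-minor-free, and by choosing the grid copy sizes correctly (this is why the construction uses $n(\ell)$ with the margin $n \ge \ell+1$ and $n$ copies), we reach a contradiction. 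The genuinely hard part — and the step I would budget the most care for — is the localization: showing that the $n$ disjoint, mutually anti-complete copies of $A_n^{(1)}$ together with their pendant paths cannot all be "smeared" across different torsos or split by small adhesions, i.e., that at least one copy is trapped inside a single almost-embeddable piece in a way that survives apex deletion. This is where the pendant paths $P_1, \dots, P_n$ and the Hamiltonian-path order $\prec$ (which ties the construction together globally) must be used to prevent the regions from escaping through bounded-size adhesion separators, since cutting a region representing a long subdivided-grid-plus-path structure across a separator of size $\le \ell$ would force a $K_{\ell+1}$-minor locally or violate connectivity of the regions.
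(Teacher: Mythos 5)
Your overall shape (GMST on $H$, localize to a torso, exploit the $n\geq\ell+1$ margin and the anti-complete copies, finish with \cref{NoApexMinor}) matches the paper, but two steps you flag as "the crux" are precisely where your sketch does not work as stated. First, the localization. You claim that pigeonholing over the copies locates one copy $c^\star$ whose regions are "essentially inside a single bag" or confined to one torso; nothing in your sketch delivers this, and the hand-wave that cutting across an adhesion of size $\le\ell$ "would force a $K_{\ell+1}$-minor locally or violate connectivity" is not an argument. You have also misread the construction: the paths $P_j$ are not private pendant paths, each $P_j$ is attached to \emph{every} vertex of $B_n$ (the $(2i-1)$-st vertex of $P_j$ is adjacent to $b_i$ for all $i$). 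This is exactly what the correct localization uses: any two $u,v\in V(B_n)$ are joined by $n$ pairwise anti-complete paths $uQ_1v,\dots,uQ_nv$ with $Q_j\subseteq P_j$, so the connected subgraphs $\bigcup(R_p: p\in V(Q_j))$ are pairwise disjoint and each meets any set separating $R_u$ from $R_v$; hence no adhesion set (size $\le\ell<n$) separates them, the subtrees $\{x: W_x\cap V(R_v)\neq\emptyset\}$ pairwise intersect, and the Helly property gives one bag $W_x$ met by \emph{all} regions of $V(B_n)$. One then passes to $\torso{H}{W_x}$ (adhesions are cliques there, so the restricted regions stay connected) and repairs the edges $uv\in E(B_n)$ whose restricted regions no longer meet by adding a triangle vertex inside the relevant adhesion set, i.e.\ $2$-clique-sums with triangles. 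Your proposal addresses neither the loss of intersections when restricting to a bag nor why the restricted regions remain connected.

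Second, the endgame is wrong as written. Deleting the $\le\ell$ apex vertices $Z$ of the torso and arguing that "$A_{n-\ell}$-ish survives" gives no contradiction with \cref{NoApexMinor}, which only forbids an $A_n$-minor; worse, if $Z$ meets the region of the universal vertex of your chosen copy, no apex-grid structure survives at all, and a bare (subdivided) grid is perfectly compatible with apex-free almost-embeddability. The paper avoids this by using the $n\ge\ell+1$ pairwise anti-complete copies of $A_n^{(1)}$ (whose regions are pairwise disjoint) to find one copy whose regions avoid $Z$ \emph{entirely}; the subgraph $\widetilde H$ of the patched torso induced by that copy's regions is then apex-free $\ell$-almost-embeddable up to triangle $2$-sums, hence $A_n$-minor-free by \cref{NoApexMinor}, and the contradiction comes from \cref{No1Sub}: $A_n^{(1)}$ is a region intersection graph over $\widetilde H$, which forces $A_n$ as a minor of $\widetilde H$. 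You never invoke \cref{No1Sub}, and without it your plan to extract an $A_n$-minor directly from the region pattern while "routing around vortices" is an unproved (and unnecessary) substitute for the actual argument.
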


  \begin{proof}
  Suppose, for contradiction, that there is a $K_t$-minor-free graph $H$ for which $G \in \text{RIG}(H)$. Let $\mathcal{R}=(R_v \subseteq H \colon v \in V(G))$ be a collection of connected subgraphs of $H$ such that $uv \in E(G)$ if and only if $V(R_u)\cap V(R_v)\neq \emptyset$. By \cref{GMST}, $H$ has a tree-decomposition $\TT = (W_x\colon x\in V(T))$ of adhesion at most $\ell$ where each torso is $\ell$-almost-embeddable. 
  
  We claim that there is an $x\in V(T)$ such that the bag $W_x$ intersects $V(R_v)$ for each vertex $v\in V(B_n)$. For each vertex $v\in V(B_n)$, the set $\{x\in V(T)\colon V(R_v)\cap W_x\neq \emptyset\}$ is a subtree of $T$. By the Helly property for subtrees, it suffices to show that any two such subtrees meet. 
  
  Assume, for contradiction, that there exist $u,v \in V(B_n)$ such that $V(R_u)$ and $V(R_v)$ do not intersect a common bag.
  Since $\TT$ has adhesion at most $\ell$, there is a set $S\subseteq V(H)$ with $|S|\leq \ell$ whose deletion separates $V(R_u)$ and $V(R_v)$.
  By construction, $G$ contains $n$ $u$--$v$ paths $u Q_1 v,\dots, u Q_n v$ with $Q_i\subseteq P_i$.
   So, for each $i \in [n]$, the connected subgraph ${Q}_i^{\star}=\bigcup(R_p \colon p\in V(Q_i))$ of $H$ connects $R_u$ to $R_v$, hence meets $S$.
  Since $Q_1,\dots, Q_{n}$ are pairwise anti-complete, the subgraphs ${Q}_1^{\star},\dots,{Q}_{n}^{\star}$ are pairwise vertex-disjoint, forcing $|S|\geq n \geq \ell+1$, a contradiction.
  
  Therefore, there is a bag $W_x$ in $\TT$ intersecting all regions $R_v$ for $v\in V(B_n)$.
  Since every adhesion set is a~clique in a~torso, $V(R_v)\cap W_x$ induces a connected subgraph $R_v'$ in $\torso{H}{W_x}$ for every $v\in V(B_n)$.
  However, there may be an edge $uv\in E(B_n)$ for which $V(R_u')\cap V(R_v')=\emptyset$.
  Nevertheless, since $V(R_u)\cap V(R_v)\neq \emptyset$, there is an adhesion set $S=W_x\cap W_y$ (for some edge $xy\in E(T)$) such that $R_u'\cap S\neq \emptyset$ and $R_v'\cap S\neq \emptyset$.
  Choose vertices $a\in V(R_u')\cap S$ and $b\in V(R_v')\cap S$.
  Then $ab\in E(\torso{H}{W_x})$.
  Add a vertex $w$ to $\torso{H}{W_x}$ adjacent to both $a$ and $b$ then include $w$ in the connected subgraphs $R_u'$ and $R_v'$.
  Repeating this procedure for every such edge produces a supergraph $H'$ of $\torso{H}{W_x}$ built by performing 2-clique-sums with triangles together with a collection $(R_v'\subseteq H' \colon v\in V(B_n))$ of~connected subgraphs in $H'$ that realizes $B_n$ as a~region intersection graph over $H'$.
 
  Let $Z \subseteq W_x$ be the set of apex vertices in~$\torso{H}{W_x}$.
  Since $|Z| \leq \ell$ and $B_n$ consists of~$n\geq \ell+1$ anti-complete copies of $A_n^{(1)}$, there exists a~copy of~$A_n^{(1)}$, denoted as $\widetilde{A}^{(1)}_n$, for which $\bigcup (V(R_x)\colon x\in V(\widetilde{A}^{(1)}_n))\cap Z = \emptyset$.
  Let $\widetilde{H}$ be the subgraph of $H'$ induced by $\bigcup (V(R_x)\colon x\in V(\widetilde{A}^{(1)}_n))$.
  Then $V(\widetilde{H}) \cap Z = \emptyset$.
  As such, $\widetilde{H}$ has a tree-decomposition with adhesion at~most~$2$ where one torso is an apex-free $\ell$-almost embeddable graph and the other torsos are triangles.
  By \cref{NoApexMinor}, $\widetilde{H}$~is \mbox{$A_n$-minor-free}.
  However, since $\widetilde{A}^{(1)}_n\in \text{RIG}(\widetilde{H})$ and $\widetilde{A}^{(1)}_n$ is isomorphic to $A^{(1)}_n$, \cref{No1Sub} implies $A_n$ is a~minor of~$\widetilde{H}$, giving us the desired contradiction. 
  \end{proof}

We now explain how to modify the construction in \cref{thm:main-girth5} to force the girth to be arbitrarily large.
Fix positive integer $g$.
Define $B_{g,n}$ to be $n A_n^{(g)}$, that is the disjoint union of $n$ copies of the $g$-subdivision of $A_n$.
Then $B_{g,n}$ has girth $3(g+1)$.
We define a total order $\prec$ of $V(B_{g,n})$ by using the same strategy of that given by~\cref{fig:Bn-Bpn}.
Similar to before, we add to $B_{g,n}$ the disjoint union of $n$ paths $P_1,\dots,P_n$ of length $g|V(B_{g,n})|-1$ and make, for every $i\in [|V(B_{g,n})|]$ and $j\in [n]$, the $(gi-1)$-st vertex of $P_j$ adjacent to the $i$-th vertex of $B_{g,n}$ along $\prec$.
Call the resulting graph $G_{g,n}$.
Since $G_{g,n}-B_{g,n}$ is a disjoint union of paths, it does not contain any cycle.
Any cycle going through $V(G_{g,n})\setminus V(B_{g,n})$ has at least $g-1$ consecutive edges within $G_{g,n}-V(B_{g,n})$.
Since no pair of vertices within the same connected component of $V(G_{g,n})\setminus V(B_{g,n})$ shares a neighbor in $V(B_{g,n})$, we conclude that every cycle in $G_{g,n}$ has length at least $g$.
Since \cref{clm:imf,lem:not-rig} also generalizes to $G_{g,n}$, this completes the proof of \cref{thm:main}.

\section*{Acknowledgments}

We thank Tuukka Korhonen, Daniel Lokshtanov, Rose McCarty, and Sebastian Wiederrecht for clarifying the origin of~\cref{conj:main}.

\bibliographystyle{abbrv}
\bibliography{main.bbl}

\begin{thebibliography}{10}

\bibitem{Alon90}
N.~Alon, P.~D. Seymour, and R.~Thomas.
\newblock A separator theorem for graphs with an excluded minor and its
  applications.
\newblock In H.~Ortiz, editor, {\em Proceedings of the 22nd Annual {ACM}
  Symposium on Theory of Computing, May 13-17, 1990, Baltimore, Maryland,
  {USA}}, pages 293--299. {ACM}, 1990.

\bibitem{Bonamy24}
M.~Bonamy, {\'{E}}.~Bonnet, H.~Déprés, L.~Esperet, C.~Geniet, C.~Hilaire,
  S.~Thomassé, and A.~Wesolek.
\newblock Sparse graphs with bounded induced cycle packing number have
  logarithmic treewidth.
\newblock {\em Journal of Combinatorial Theory, Series B}, 167:215--249, 2024.

\bibitem{BonnetHKM23}
{\'{E}}.~Bonnet, J.~Hodor, T.~Korhonen, and T.~Masa\v{r}{\'{\i}}k.
\newblock Treewidth is polynomial in maximum degree on weakly sparse graphs
  excluding a planar induced minor.
\newblock {\em CoRR}, abs/2312.07962, 2023.

\bibitem{CDDFGHHWWYl2024hadwiger}
R.~Campbell, J.~Davies, M.~Distel, B.~Frederickson, J.~P. Gollin, K.~Hendrey,
  R.~Hickingbotham, S.~Wiederrecht, D.~R. Wood, and L.~Yepremyan.
\newblock Treewidth, {H}adwiger number, and induced minors, 2024.

\bibitem{Dallard-3}
C.~Dallard, M.~Milanic, and K.~Storgel.
\newblock Treewidth versus clique number. {III.} {T}ree-independence number of
  graphs with a forbidden structure.
\newblock {\em J. Comb. Theory {B}}, 167:338--391, 2024.

\bibitem{Davies22}
J.~Davies.
\newblock Oberwolfach report 1/2022.
\newblock 2022.

\bibitem{Davies-pc}
J.~Davies.
\newblock Personal communication, 2024.

\bibitem{dujmovic2017layered}
V.~Dujmovi\'{c}, P.~Morin, and D.~R. Wood.
\newblock Layered separators in minor-closed graph classes with applications.
\newblock {\em J. Combin. Theory Ser. B}, 127:111--147, 2017.

\bibitem{GartlandThesis}
P.~Gartland.
\newblock {\em Quasi-Polynomial Time Techniques for Independent Set and Beyond
  in Hereditary Graph Classes}.
\newblock PhD thesis, UC Santa Barbara, 2023.

\bibitem{GP2023Metric}
A.~Georgakopoulos and P.~Papasoglu.
\newblock Graph minors and metric spaces.
\newblock {\em CoRR}, 2023.

\bibitem{KorhonenL24}
T.~Korhonen and D.~Lokshtanov.
\newblock Induced-minor-free graphs: Separator theorem, subexponential
  algorithms, and improved hardness of recognition.
\newblock In D.~P. Woodruff, editor, {\em Proceedings of the 2024 {ACM-SIAM}
  Symposium on Discrete Algorithms, {SODA} 2024, Alexandria, VA, USA, January
  7-10, 2024}, pages 5249--5275. {SIAM}, 2024.

\bibitem{Lee17}
J.~R. Lee.
\newblock Separators in region intersection graphs.
\newblock In C.~H. Papadimitriou, editor, {\em Proc. 8th {I}nnovations in
  {T}heoretical {C}omputer {S}cience {C}onference}, volume~67 of {\em LIPIcs},
  pages 1:1--1:8. Schloss Dagstuhl, 2017.

\bibitem{Lipton79}
R.~J. Lipton and R.~E. Tarjan.
\newblock A separator theorem for planar graphs.
\newblock {\em SIAM Journal on Applied Mathematics}, 36(2):177--189, 1979.

\bibitem{Lokshtanov-pc}
D.~Lokshtanov.
\newblock Personal communication, 2025.

\bibitem{McCarty-lectures}
R.~McCarty.
\newblock Structurally sparse graphs.
\newblock Lecture series at the Structural Graph Theory Bootcamp, Warsaw,
  \url{https://sites.google.com/view/strug/main}, 2023.

\bibitem{Pohoata14}
A.~C. Pohoata.
\newblock {\em Unavoidable induced subgraphs of large graphs}.
\newblock Senior theses, Department of Mathematics, Princeton University, 2014.

\bibitem{GraphMinors}
N.~Robertson and P.~D. Seymour.
\newblock {Graph Minors I--XXIII}.
\newblock {\em J. Combin. Theory Ser. B (and Journal of Algorithms)},
  1983--2012.

\bibitem{robertson2003graph}
N.~Robertson and P.~D. Seymour.
\newblock Graph minors. {XVI}. {E}xcluding a non-planar graph.
\newblock {\em J. Combin. Theory Ser. B}, 89(1):43--76, 2003.

\bibitem{Sinden66}
F.~W. Sinden.
\newblock {Topology of thin film RC circuits}.
\newblock {\em Bell System Technical Journal}, 45(9):1639--1662, 1966.

\bibitem{Wiederrecht23op}
S.~Wiederrecht.
\newblock Graph searching in {RIG}s.
\newblock In {\em Open Problems, GRASTA 2023 Workshop}, Bertinoro, Italy, 2023.
\newblock Section 1.3, Problem 3:
  \url{https://www-sop.inria.fr/teams/coati/events/grasta2023/slides/Grasta23_OpenProblems.pdf}.

\end{thebibliography}

\end{document}